\newtheorem{theorem}{Theorem}[section]
\newtheorem{lemma}[theorem]{Lemma}
\newtheorem{proposition}[theorem]{Proposition}
\newtheorem{remark}[theorem]{Remark}
\date{}
\title{A stochastic comparison result for the multitype contact process with unequal death rates}
\author{Joseph P. Stover\footnote{Department of Mathematics, Gonzaga University, 502 E Boone Ave, Spokane, WA 99258. E-mail:  \href{mailto:stover@gonzaga.edu}{stover@gonzaga.edu}}}
\begin{document}

\maketitle

{\let\thefootnote\relax\footnotetext{Keywords: stochastic order; multitype contact process; interacting particle system; attractive}}

{\let\thefootnote\relax\footnotetext{Published in: Statistics and Probability Letters 162 (2020) 108763}}
{\let\thefootnote\relax\footnotetext{DOI: \href{https://doi.org/10.1016/j.spl.2020.108763}{10.1016/j.spl.2020.108763}}

\begin{abstract}\addcontentsline{toc}{section}{Abstract}
A stochastic comparison result that makes progress towards understanding the classical multitype contact process with unequal death rates is given. It has long been conjectured that the particle type with the largest birth to death rate ratio survives and the other dies out. A point process coupling result of Broman \cite{broman2007} is used to give a sufficient condition for when the dominant particle type survives.
\end{abstract}

\section{Introduction}

The contact process is a continuous-time stochastic process that models a spreading organism or infection over a discrete graph. For a comprehensive historical and theoretical background, see Liggett's books \cite{LiggettIPS} and \cite{Liggett99}. Here we consider the nearest neighbor contact process on the $d$-dimensional square lattice. For site $x\in \mathbb Z^d$ its set of nearest neighbors is denoted $\mathcal N(x)$ and includes all $y$ that are $\ell_1$ distance of $1$ away. For configuration $\eta\in\{0,1\}^{\mathbb Z^d}$ at each site $x\in\mathbb Z^d$ the flip rates for $\eta_t(x)$ the configuration of the process at time $t$ are given by 
\begin{equation*}
\begin{tabular}{cc}
Transition & Rate\\
$0\rightarrow1$ & $\lambda n_1$\\
$1\rightarrow0$ & 1\\ 
\end{tabular}
\end{equation*}
where $n_1=\sum_{y\in \mathcal N(x)}\eta_t(y)$ is the number of neighboring sites that are in state $1$. We will refer to sites in state 1 as infected or occupied by type 1 and state 0 as empty. There is a critical birth rate parameter value $\lambda_c$ such that when $\lambda>\lambda_c$ the process is said to survive, and otherwise it dies out. The precise numerical value of $\lambda_c=\lambda_c(d)$ depends on the dimension of the lattice. The contact process is called supercritical when $\lambda>\lambda_c$.

Informally, survival means that when the initial state has finitely many infected sites there are infected sites at all future times. Let $A_t$ be the set of infected sites at time $t$ and let $\mathbb P^A$ be the distribution of the process with $A$ the set of initially infected sites and all other sites empty. The basic contact process exhibits what is called \textit{strong survival} defined by 
\begin{equation*}
\mathbb P^{\{x\}}(x\in A_t \text{ i.o.})>0
\end{equation*}
which means that when site $x$ is the only initially infected site, then there is a positive probability of it being infected infinitely often. When there are infinitely many initially infected sites, the distribution of the process converges weakly to the upper invariant measure $\overline\nu$ which is the limiting distribution for the process starting with all sites infected. Note that $\overline\nu$ is translation invariant as well. See \cite{Liggett99} for details on weak convergence and survival of the contact process.

When an interacting particle system satisfies a stochastic ordering relationship, it is often called \textit{attractive} or \textit{monotone}. The contact process possesses this property in the sense that if $\eta_t$ and $\tilde\eta_t$ are two contact processes with birth rates $\lambda\leq\tilde\lambda$, respectively, and initial conditions satisfying $\eta \leq \tilde\eta$, then these two processes can be coupled together so that $\eta_t \leq \tilde\eta_t$ for all $t$. Here $\eta \leq \tilde\eta$ means that $\eta(x) \leq \tilde\eta(x)$ for all $x\in\mathbb Z^d$. These partial ordering relationships can also be stated as
\begin{align*}
\{x \mid \eta_t(x)=0\} &\supset \{x \mid \tilde\eta_t(x)=0\}\\
\{x \mid \eta_t(x)=1\} &\subset \{x \mid \tilde\eta_t(x)=1\}.
\end{align*}
Thus we say that the contact process is attractive and monotone increasing in $\lambda$. 

The graphical construction for the contact process was originally introduced by Harris \cite{harris1978} and has proved to be quite a useful and presentable tool over the years. For each site $x$, there is a vertical timeline starting at $t=0$ with $t$ increasing in the upward direction. On each timeline, place $\times$ symbols at intensity $1$ (death marks) and for each neighboring site $y$ place arrows from $y$ to $x$ at intensity $\lambda$. Site $x$ can be infected (if it is empty) by site $y$ (if it is infected) along these arrows. An infection at $x$ dies when it hits a death mark. One can envision fluid placed at the start of the timelines at $t=0$ for sites which are to be initially infected. The fluid flows up the graphical structure stopping at $\times$'s and branching at arrows according to the direction they are pointed. An active path from $(x_1,t_1)$ to $(x_2,t_2)$ is a path in the graphical structure that connects site $x_1$ at time $t_1$ to site $x_2$ at time $t_2$ following vertical timelines, branching from one timeline to another along arrows, and never crossing any death marks. 

The distribution of the contact process is completely characterized by the distribution of the graphical structure. Given initial state $\eta$, the probability that $\eta_t$ satisfies some criteria is equivalent to the probability that the graphical structure satisfies that criteria using $\eta$ as the initial state and following all active paths from initially infected sites to determine the configuration at time $t$. For example, if site $x$ is the only initially infected site, then the probability that the process survives strongly is equivalent to the probability that the graphical structure admits active paths from $(x,0)$ to $(x,t)$ for infinitely many $t$. Again, see Liggett \cite{Liggett99} for more details. The arguments in this paper are based on the use of a graphical construction for establishing stochastic comparison relationships between processes. 

Broman \cite{broman2007} proved a useful stochastic domination result for a point process that randomly switches between two arrival rates and used this to study the contact process in a randomly evolving environment (CPREE) where the death rate randomly flips between two values. Remenik \cite{remenik2008} used Broman's result to study a CPREE-type model where sites become blocked randomly (similar to a contact process where the birth rate switches between two values). 

The multitype contact process (MCP) was originally studied by Neuhauser \cite{Neuhauser1992} 
and is constructed from two contact processes competing on the same lattice (with particle types 1 and 2 and empty sites 0). It has long been conjectured for the multitype contact process that the type with the greater birth to death rate ratio (BDR) will exclude the other type. We refer to this as the BDR conjecture. We use Broman's coupling result to give a sufficient condition on when one type survives and excludes the other.

\section{Multitype contact process}

The transition rates for the nearest neighbor MCP at each site $x\in\mathbb Z^d$ are 
\begin{equation*}
\begin{tabular}{cl}
Transition & Rate \\
$0\rightarrow1$ & $\beta_1 n_1$ \\
$1\rightarrow0$ & $\delta_1$ \\
$0\rightarrow2$ & $\beta_2 n_2$\\
$2\rightarrow0$ & $\delta_2$\\
\end{tabular}
\end{equation*}
where $n_i$ is the number of nearest neighbors of type $i$. We call $\beta_i$ and $\delta_i$ birth and death rates for type $i$. We say type $i$ is supercritical if its BDR $\beta_i/\delta_i$ is greater than $\lambda_c$. A supercritical type survives in absence of other type since it behaves just like a standard supercritical contact process. 

Neuhauser (see \cite{Neuhauser1992} Theorem 1) showed that when $\delta_1=\delta_2=1$, then the particle type with the greater birth rate survives (if it is supercritical) and the other dies out. It was also conjectured more generally that when the death rates are unequal, the type with the greater BDR survives (assuming it is supercritical) and excludes the other type. The type of survival Neuhauser proved is as follows: when $\mu$ is a translation invariant initial distribution with $\mu(\eta(x)=2)>0$, then the distribution of $\eta_t$ converges weakly to $\overline\nu_2$ the upper invariant measure for the process started with all sites in state 2. Note that $\overline\nu_2$ is exactly the translation invariant upper stationary measure for a standard contact process (with particle types appropriately re-labeled) and satisfies $\overline\nu_2(\eta(x)=1)=0$, $\overline\nu_2(\eta(x)=2)>0$.

The MCP is attractive and monotone in each of its parameters. If we carefully re-order the particle types, e.g. replace the label of type $1$ by ``$-1$'' to create the ordering $-1<0<2$ for the possible states at each site, then we have the traditional stochastic ordering in that a coupling exists such that $\eta\leq\xi$ implies $\eta_t\leq\xi_t$ for all time (see \cite{stoverIMPS} and \cite{borrello2011}). In this case, type $-1$ is one of the competing species, type $2$ is the other, and $0$ still means empty. If $\xi_t$ has a higher birth rate and lower death rate for type 2 and lower birth rate and higher death rate for type 1, this ordering is also preserved giving monotonicity in each parameter. Stover \cite{stoverIMPS} noticed this fact and also developed a method for determining attractiveness for a broader class of multitype contact processes. Borrello \cite{borrello2011} developed a method to assess attractiveness when jumps from site to site are allowed in addition to births and deaths. 

\subsection{Graphical construction for the MCP}
The graphical construction that we use for the MCP is as follows. On the timeline for each $x$ independently place $\times$ symbols with intensity $\delta_2$ (where only type 2 is killed) and $\bullet$ symbols with intensity $\delta_1$ (where only type 1 is killed). These are referred to as death marks. For each neighboring site $y$, distribute with intensity $\beta_1$ arrows labeled with a 1 and the tip pointed towards $x$ and the other end attached to $y$. Similarly place arrows labeled with a 2 at intensity $\beta_2$. 
These symbols are referred to as 1-arrows and 2-arrows, respectively. Arrows indicate where a birth of the labeled type may occur at $x$ (if it is empty) from $y$ (if it is occupied by the labeled type). On each timeline, one could also choose to reverse the direction of the arrows (as long as arrows going both directions are generated at the correct intensities), but it is key for our purposes that they are generated at site $x$ (the site to be infected) from the Poisson process at the tips of the arrows. 

The attractiveness of the MCP can be seen directly from this graphical representation. When two processes $\eta_t$ and $\xi_t$ initially satisfy 
\begin{equation}\label{eq:mcp_setattr}
\begin{split}
\{x \mid \eta_t(x)=1\} &\supset \{x \mid \xi_t(x)=1\}\\
\{x \mid \eta_t(x)=0 \text{ or } 1\} &\supset \{x \mid \xi_t(x)=0 \text{ or } 1\}\\
\{x \mid \eta_t(x)=0 \text{ or } 2\} &\subset \{x \mid \xi_t(x)=0 \text{ or } 2\}\\
\{x \mid \eta_t(x)=2\} &\subset \{x \mid \xi_t(x)=2\}
\end{split}
\end{equation}
at $t=0$, then each symbol in the graphical construction preserves these relationships and thus they hold for all $t$. If we instead replace the label of type 1 by a $-1$, then we have monotonicity in the form of $\eta_t \leq \xi_t$. 
The arguments in this paper will usually only require the first or last lines of \eqref{eq:mcp_setattr}. 

Since the MCP is monotone in each its parameters (see \cite{stoverIMPS} and \cite{borrello2011}), we have the following result which is essentially identical to Theorem 1 in Neuhaser \cite{Neuhauser1992} except with $\delta_1>\delta_2$ instead of $\delta_1=\delta_2=1$. 

\begin{proposition}\label{lem:mcp1}
For the MCP with parameters $\beta_1<\beta_2$, $\delta_1>\delta_2$, with type 2 supercritical, type 2 survives and type 1 dies out in the sense that if the initial distribution $\mu$ is translation invariant with $\mu(\eta(x)=2)>0$, then the distribution of the process at time $t$ converges weakly to $\overline\nu_2$ the upper invariant measure starting from all sites initially in state 2.
\end{proposition}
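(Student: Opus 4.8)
The plan is to deduce this entirely from Neuhauser's Theorem 1 \cite{Neuhauser1992} (the equal death rate case) by exploiting the monotonicity of the MCP in each of its parameters, recorded in \eqref{eq:mcp_setattr}. Throughout I adopt the ordering $-1<0<2$ (relabel type $1$ as $-1$) so that ``$\leq$'' is the stochastic order in which the process is monotone. The obstruction to citing Neuhauser directly is that with $\delta_1>\delta_2$ one cannot rescale time to normalize \emph{both} death rates to $1$, so Neuhauser's hypothesis is unavailable for $\xi_t$ itself. The remedy is to sandwich the process of interest between two comparison processes, each of which does converge weakly to $\overline\nu_2$, and then squeeze. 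Write $\xi_t$ for the MCP with the given parameters started from $\mu$.

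For the lower bound I would introduce a comparison process $\eta_t$ with the same birth rates $\beta_1,\beta_2$ but with both death rates lowered to the common value $\delta_2$, started from the same configuration as $\xi_t$. Lowering the type-$1$ death rate from $\delta_1$ to $\delta_2$ only helps type $1$, i.e.\ makes the state smaller in the $-1<0<2$ order, so parameter monotonicity gives $\eta_t\leq\xi_t$ in the coupling. The process $\eta_t$ has equal death rates, so after rescaling time by $\delta_2$ it is precisely Neuhauser's MCP with unit death rates, birth rates $\beta_1/\delta_2<\beta_2/\delta_2$, and type $2$ supercritical (since $\beta_2/\delta_2>\lambda_c$). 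Neuhauser's Theorem 1 then applies with the translation invariant initial law $\mu$ satisfying $\mu(\eta(x)=2)>0$, yielding $\eta_t\Rightarrow\overline\nu_2$. Because $\eta_t\leq\xi_t$ forces the type-$2$ sites of $\eta_t$ into those of $\xi_t$ (and the type-$1$ sites of $\xi_t$ into those of $\eta_t$), this already shows type $2$ in $\xi_t$ is asymptotically at least as abundant as under $\overline\nu_2$ and that type $1$ dies out.

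To pin the limit down from above I would compare $\xi_t$ with a pure type-$2$ contact process $\zeta_t$ (birth $\beta_2$, death $\delta_2$, no type $1$) started from the top configuration ``all sites in state $2$.'' This initial state dominates $\xi_0$, and suppressing type $1$ only raises the configuration in the $-1<0<2$ order, so monotonicity gives $\xi_t\leq\zeta_t$. By the definition of the upper invariant measure, $\zeta_t$ decreases weakly to $\overline\nu_2$. Thus, in the stochastic order, $\mathrm{Law}(\eta_t)\leq\mathrm{Law}(\xi_t)\leq\mathrm{Law}(\zeta_t)$ with both outer laws converging weakly to the same $\overline\nu_2$. On the compact space $\{-1,0,2\}^{\mathbb Z^d}$ I would finish by testing this domination against the increasing continuous indicators of cylinder events such as $\{\eta(x)=2\}$ and $\{\eta(x)\neq -1\}$ and their finite intersections; convergence of both outer laws then squeezes the middle, giving $\xi_t\Rightarrow\overline\nu_2$.

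The conceptual content is light, so I expect the main work to lie in the bookkeeping rather than in any single hard estimate. The delicate point is keeping straight the two equivalent formulations of the order—the set-inclusion form of \eqref{eq:mcp_setattr} versus the scalar $-1<0<2$ form—so that each of the three processes is compared in the correct direction, together with verifying that the lower and upper comparison processes genuinely have $\overline\nu_2$ (and not some other invariant measure) as their common weak limit, which is what makes the squeeze collapse to a single measure.
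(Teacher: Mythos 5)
Your proposal is correct and follows essentially the same route as the paper: the paper also sandwiches the unequal-death-rate process between Neuhauser's equal-death-rate MCP (obtained by lowering the type-1 death rate, with the general case handled by a time-scaling normalization) and the process started from all sites in state 2, then squeezes both bounds to $\overline\nu_2$. The only cosmetic differences are that the paper normalizes $\delta_2=1$ up front rather than rescaling time in the comparison process, and it states the squeeze directly in terms of the ordered one-site marginals.
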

\begin{proof}
Let $\eta_t$ be the MCP with parameters $\delta_1=\delta_2=1$ and $\beta_2>\beta_1$ and $\beta_2>\lambda_c$. Let $\xi_t$ have the same rates as $\eta_t$ except with $\delta_1=1+\sigma$ for $\sigma>0$. The MCP is monotone in parameter $\delta_1$ in the sense that if
\begin{align*}
\{x \mid \eta_t(x)=1\} &\supset \{x \mid \xi_t(x)=1\}\\
\{x \mid \eta_t(x)=2\} &\subset \{x \mid \xi_t(x)=2\}
\end{align*}
holds initially at $t=0$, then it holds for all $t$ when suitably coupled. To see this, consider the graphical representation described above with a slight modification. Distribute arrows as already declared, $\times$'s at rate $1$ but $\bullet$'s at rate $\sigma$. Both processes use arrows and $\times$'s identically, but only for $\xi_t$ does type 1 die at $\bullet$'s whereas all types die for both processes at $\times$ death marks. This graphical construction preserves the given subset comparison. 

Let $\mathbb P^\eta$, $\mathbb P^\xi$, and $\mathbb P^{\overline 2}$ be the distributions of the processes $\eta_t$, $\xi_t$ (both with translation invariant initial distribution $\mu$), and $\zeta_t$ the process started with all sites in state 2, respectively. 
These measures are stochastically ordered in the sense that their underlying processes can be coupled together so that if \eqref{eq:mcp_setattr} is satisfied at $t=0$ then it is for all $t>0$ (with an additional set on the right side for $\zeta_t$ appropriately included). Therefore we have that 
\begin{align*}
\mathbb P^\eta(\eta_t(x)=1) \geq \mathbb P^\xi(\xi_t(x)=1)  \geq \mathbb P^{\overline 2}(\zeta_t(x)=1) \\
\mathbb P^\eta(\eta_t(x)=2) \leq \mathbb P^\xi(\xi_t(x)=2)  \leq \mathbb P^{\overline 2}(\zeta_t(x)=2)
\end{align*}
for all $t\geq0$. Note that $\mathbb P^{\overline 2}(\zeta_t(x)=1)=0$.

Theorem 1 in Neuhauser \cite{Neuhauser1992} shows that $\mathbb P_t^\eta \Rightarrow \overline\nu_2$ (weak convergence of measures). By definition, $\mathbb P_t^{\overline 2} \Rightarrow \overline\nu_2$, thus $\mathbb P_t^\xi\Rightarrow \overline\nu_2$. So we have that type 2 survives since $\lim_{t\rightarrow\infty}\mathbb P (\xi_t(x)=2) = \overline\nu_2(\xi(x)=2)>0$ and type 1 dies out since $\lim_{t\rightarrow\infty}\mathbb P (\xi_t(x)=1) = \overline\nu_2(\xi(x)=1)=0$.
\end{proof}

The result of Proposition \ref{lem:mcp1} only requires that type 2 is supercritical, but it still applies and is more interesting when both types are supercritical. By a scaling argument, this generalizes to any parameters satisfying $\beta_1<\beta_2$, $\delta_1>\delta_2$ with $\frac{\beta_2}{\delta_2}>\lambda_c$.

We now present the main result. Let $B_t\subset\mathbb Z^d$ be the set of sites that are in state 2 at time $t$ and $\mathbb P^B$ be the distribution of the process with $B$ the set of sites that are initially occupied by type 2 and all other sites initially type 1.

\begin{theorem}[Type 2 strong survival sufficiency condition]\label{thm:cpree_mcp}
Consider the MCP with parameters $\beta_2=c\beta$, $\delta_2=1$, $\beta_1=\beta\alpha$, $\delta_1=\alpha$ with $\beta>\lambda_c$ and define
\begin{equation}\label{eq:mcp_lambar}
\overline{\lambda}(\beta,c,\alpha)=\frac12\left(c\beta+\alpha+2d\beta\alpha-\sqrt{(c\beta-\alpha-2d\beta\alpha)^2+8d\alpha c \beta^2}\right).
\end{equation}
If $\overline\lambda(\beta,c,\alpha)>\lambda_c$, then type 2 survives strongly in the sense that 
\begin{equation*}
\mathbb P^{\{x\}}(x\in B_t \text{ i.o.})>0.
\end{equation*}
\end{theorem}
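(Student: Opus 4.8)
The plan is to bound the type-2 dynamics below by a \emph{single-type} contact process whose effective birth rate is $\overline\lambda(\beta,c,\alpha)$, and then invoke the known strong-survival theory for the ordinary contact process. The essential difficulty is that in the MCP a site occupied by type 1 is not available for a type-2 birth; type 2 must first wait for the type-1 occupant to die (at rate $\delta_1=\alpha$) before it can invade. So the relevant comparison process is not the MCP itself but a contact process in a randomly evolving environment (CPREE), in which each site alternates between a ``type 2 may be born here'' phase and a ``blocked by type 1'' phase. The key tool is Broman's point-process coupling result, which lets one dominate such an environment-switching arrival process by a single Poisson process of a computable reduced intensity.

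First I would set up the comparison from the graphical construction. Track only the type-2 sites, regarding every non-type-2 site as an obstacle that is ``open'' when empty and ``closed'' when occupied by type 1. Because type 1 is itself a contact process with birth rate $\beta_1=\beta\alpha$ and death rate $\delta_1=\alpha$, a given target site toggles between open and closed states; a type-2 infection can cross a 2-arrow into $x$ only during the open phases. Using the first and last lines of the attractiveness relation \eqref{eq:mcp_setattr}, I would argue that replacing the genuine type-1 configuration by the worst-case environment (type 1 maximally present) only decreases the set of type-2 sites, so it suffices to prove survival of the dominated process. The state of each site is then a two-state Markov environment, and the effective 2-arrow arrival process at $x$ is a Poisson process of rate $\beta_2=c\beta$ that is ``switched off'' during closed phases governed by the type-1 birth/death rates $\beta\alpha$ and $\alpha$.

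Next I would apply Broman's stochastic domination result to this switched point process. Broman's theorem compares a point process that alternates between two arrival rates against a homogeneous Poisson process, and yields a reduced constant intensity that the switched process stochastically dominates (or is dominated by, in the direction we need). The algebraic content of the theorem is exactly the formula \eqref{eq:mcp_lambar}: solving for the effective rate of the dominating homogeneous process with the two environment rates and the switching rates $2d\beta\alpha$ (the rate at which a neighbor becomes type-1-occupied, summed over the $2d$ neighbors) and $\alpha$ produces the quadratic whose smaller root is $\overline\lambda(\beta,c,\alpha)$. I would therefore verify that the parameters of the switched environment feed into Broman's formula to give precisely this $\overline\lambda$; this is the bookkeeping step where the specific coefficients $2d$, $c\beta$, and $8d\alpha c\beta^2$ in \eqref{eq:mcp_lambar} get pinned down.

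Finally, with the switched arrival process dominated below by a homogeneous Poisson process of intensity $\overline\lambda$, the dominated type-2 process stochastically dominates an ordinary single-type nearest-neighbor contact process with birth rate $\overline\lambda$ and death rate $1$. By hypothesis $\overline\lambda>\lambda_c$, so this comparison contact process is supercritical and, started from the single site $x$, survives strongly: $\mathbb P^{\{x\}}(x\in A_t \text{ i.o.})>0$ by the standard strong-survival property recalled in the introduction. Monotonicity then transfers this to the type-2 process, giving $\mathbb P^{\{x\}}(x\in B_t \text{ i.o.})>0$. The main obstacle I anticipate is the second step: making the reduction to a genuinely two-state \emph{independent} environment rigorous, since the type-1 occupancy at a site is correlated across space and time through its own contact dynamics, whereas Broman's coupling is stated for a cleanly defined switching process; I expect the resolution is to dominate the true environment by an i.i.d.-in-a-suitable-sense switching environment using attractiveness before invoking Broman, so that the correlations only help (they cannot make type-2 survival harder than in the dominated model).
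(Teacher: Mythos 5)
Your proposal follows essentially the same route as the paper: dominate the MCP below by a CPREE in which type-1 births are made spontaneous at the worst-case rate $2d\beta\alpha$, apply Broman's coupling to the per-neighbor arrival process of usable 2-arrows with $\alpha_0=0$, $\alpha_1=c\beta$, $\gamma=\alpha(1+2d\beta)$, $p=(1+2d\beta)^{-1}$ (which yields exactly \eqref{eq:mcp_lambar}), and finish by comparison with a supercritical standard contact process. The obstacle you flag at the end is resolved in the paper in just the spirit you anticipate: ``blocked'' 2-arrows are defined purely from the graphical symbols (a 1-arrow below with no type-1 death mark in between) and are simply discarded, so the blocking environment is an autonomous two-state Markov chain independent of the process history, and discarding possibly-usable arrows can only hurt type 2.
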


This theorem is proved in Section \ref{sec:mainproof} and uses Broman's \cite{broman2007} point process coupling and a comparison to a standard contact process. 

This result does not give any useful information when $\overline\lambda(\beta,c,\alpha) \leq \lambda_c$. It is also important to note here that $\overline\lambda$ depends on the dimension of the lattice in addition to the other parameters.  
Taking derivatives of $\overline\lambda$ with respect to $c$ and $\alpha$ shows that $\overline\lambda$ is increasing in both $c$ and $\alpha$ (though it does require a bit of careful algebra and reasoning). 
We also have that 
$\overline{\lambda}(\beta,c,\alpha)\rightarrow \alpha$ as $c\rightarrow\infty$
thus so long as $\alpha>\lambda_c$ then $c$ can be chosen large enough so that type 2 survives. Also $\overline{\lambda}(\beta,c,\alpha)\rightarrow \frac{c\beta}{1+2d\beta}$ as $\alpha\rightarrow\infty$ so as long as $c\beta>(1+2d\beta)\lambda_c$ then $\alpha$ can be chosen large enough to make type $2$ survive. If $c\beta \leq \lambda_c$ then type 2 dies out trivially.  It is interesting in its own right that $\overline\lambda$ is increasing in $\alpha$ since this does not change the BDR for either type, it simply makes type 1 have faster dynamics so-to-speak. 

\subsection{Consistency with BDR conjecture}

Here we present a short argument to see that $c>1$ (giving type 2 the highest BDR) is required for $\overline\lambda>\lambda_c$. This is an important fact because if $c<1$ were allowed, then our result would be inconsistent with the conjecture that the type with the highest birth to death rate ratio survives with the other dying out.

A well-known lower bound on the standard contact process critical birth rate parameter value is given by $\lambda_c \geq \frac1{2d-1}$. We now assume all parameters except $c$ are fixed in such a way that there is a large enough value of $c$ which gives $\overline\lambda>\lambda_c$. Solving $\overline\lambda=\frac1{2d-1}$ for $c$ gives   
\begin{equation}\label{eq:cbnd}
c^*(\alpha,\beta,d)=
\frac{\alpha(1+2d\beta)(2d-1)-1}
{\beta(2d-1)\left(\alpha(2d-1)-1\right)}
\end{equation}
so that $c>c^*$ is necessary to make $\overline\lambda>\lambda_c$. We show that $c^*>1$. 
Since $\overline\lambda$ increases to $\alpha$ as $c\rightarrow\infty$, we need $\alpha>\lambda_c$ in order to make $\overline\lambda>\lambda_c$ possible for some sufficiently large value of $c$. Thus we can assume that $\alpha>\frac1{2d-1}$ which makes the denominator of \eqref{eq:cbnd} positive. Finally, some algebraic rearranging shows that $c^*>1$ if  
\begin{equation*}
\alpha-\frac1{2d-1}+\beta(\alpha+1)>0
\end{equation*}
which is true since $\beta(\alpha+1)>0$ and $\alpha>\frac1{2d-1}$. Therefore $c>1$ is required, and we conclude that Theorem \ref{thm:cpree_mcp} is consistent with the BDR conjecture. 

Also note that the requirement $\alpha>\lambda_c$ implies that, in $d=1$, type 1 necessarily has a higher death rate than type 2 for our result to hold. For $d=2$, the best known upper bound on $\lambda_c$ is $\frac2d$, thus apparently $\alpha>1$ may be required for our result to hold, but numerical simulations suggest that $\lambda_c<1$ in two dimensions. For $d\geq3$, $\alpha<1$ is allowed thus type 1 can have a lower death rate and slower overall dynamics, relative to type 2.

\section{Broman's result and a CPREE model}
In this section, we present some results that will be used to prove Theorem \ref{thm:cpree_mcp}. First, Broman's stochastic domination result for point processes is given (see \cite{broman2007} for full details). 

Consider $(B_t,X_t)$ a coupling of a two-state background process $B_t\in\{0,1\}$ and Poisson counting process $X_t$ whose arrival rate depends on the state of the background process. When $B_t=i$, $X_t$ has arrival rate $\alpha_i$, and $B_t$ flips between $0$ and $1$ independently of $X_t$. The transitions and rates are given as
\begin{equation*}
\begin{tabular}{cl}
Transition & Rate \\
$(0,k)\rightarrow(1,k)$ & $\gamma p$ \\
$(1,k)\rightarrow(0,k)$ & $\gamma (1-p)$ \\
$(0,k)\rightarrow(0,k+1)$ & $\alpha_0$ \\
$(1,k)\rightarrow(1,k+1)$ & $\alpha_1$ \\
\end{tabular}
\end{equation*}
It is possible for $X_t$ to stochastically dominate a standard Poisson counting process with rate $\lambda$. The following lemma is part of Theorem 1.4 from \cite{broman2007}. This result requires that the background process is initially at equilibrium with $B_0=1$ with probability $p$ and $B_0=0$ with probability $1-p$ initially.

\begin{lemma}[Broman's coupling]\label{lem:broman}
For $X_t$ defined above, let $\tilde X_t$ be a Poisson counting process with rate $\lambda$. Then $\lambda \leq \overline\lambda(\alpha_0,\alpha_1,\gamma,p)$ for
\begin{equation*}
\overline{\lambda}(\alpha_0,\alpha_1,\gamma,p)=\frac12\left(\alpha_1+\alpha_0+\gamma-\sqrt{(\alpha_1-\alpha_0-\gamma)^2+4\gamma(1-p)(\alpha_1-\alpha_0)}\right)
\end{equation*}
implies that $X_t$ and $\tilde X_t$ can be coupled together so that $\tilde X_t\leq X_t$.
Furthermore, $\overline\lambda$ is the maximum possible value of $\lambda$ where this coupling is possible.
\end{lemma}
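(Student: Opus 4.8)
The plan is to read off $\overline\lambda$ as the principal eigenvalue of a tilted generator and then prove the two halves of the statement (that $\lambda\le\overline\lambda$ is necessary and that it is sufficient) separately. Throughout I reveal the background trajectory $(B_s)_{s\ge0}$ first; conditionally on it, $X_t$ is an inhomogeneous Poisson process with cumulative intensity $\Lambda(t)=\int_0^t\alpha_{B_s}\,ds$, so that $X_t=N(\Lambda(t))$ for a standard rate-$1$ Poisson process $N$. The background is the two-state chain with generator
\[
Q=\begin{pmatrix}-\gamma p & \gamma p\\ \gamma(1-p) & -\gamma(1-p)\end{pmatrix},
\]
whose stationary law is exactly the initial law $(1-p,p)$ assumed in the statement. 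Writing $D=\mathrm{diag}(\alpha_0,\alpha_1)$, a short computation shows that the smaller root of the quadratic defining $\overline\lambda$ equals the smallest eigenvalue of $M=D-Q$; equivalently $\overline\lambda=-\rho$, where $\rho$ is the (real, Perron--Frobenius) top eigenvalue of $Q-D$. Checking that the trace and determinant of $M$ reproduce the square-root formula is routine algebra. The conceptual point is that $\overline\lambda$ is the exponential decay rate of the empty-interval probability, since by Feynman--Kac $\mathbb P(X_t=0)=\mathbb E\big[e^{-\int_0^t\alpha_{B_s}\,ds}\big]=\langle\mu_0,e^{t(Q-D)}\mathbf 1\rangle\sim c\,e^{-\overline\lambda t}$ with $c>0$.

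For the maximality (necessity) claim I would argue as follows. If $X_t$ and $\tilde X_t$ are coupled so that $\tilde X_t\le X_t$ for every $t$, then for each fixed $t$ the event $\{X_t=0\}$ is contained in $\{\tilde X_t=0\}$, whence
\[
\mathbb P(X_t=0)\le\mathbb P(\tilde X_t=0)=e^{-\lambda t}.
\]
Combining this with the asymptotics $\mathbb P(X_t=0)\sim c\,e^{-\overline\lambda t}$ from the previous step and letting $t\to\infty$ forces $\lambda\le\overline\lambda$. This shows that no coupling can succeed for $\lambda>\overline\lambda$, which is exactly the assertion that $\overline\lambda$ is the largest admissible rate.

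The substantive half is to construct, for every $\lambda\le\overline\lambda$, an explicit coupling realizing $\tilde X_t\le X_t$ for all $t$. Using the time-change representation, the requirement $\tilde X_t\le X_t$ for all $t$ is equivalent to ordering arrival times, $T^{\tilde X}_k\ge T^X_k=\Lambda^{-1}(\sigma_k)$ simultaneously for every $k$, where $\sigma_1<\sigma_2<\cdots$ are the points of $N$. I would build $\tilde X$ greedily on the revealed environment --- letting $\tilde X$ fire at rate $\lambda$ but only while it is strictly behind $X$ --- and then certify that the deficit never becomes positive by using the positive right eigenvector $h$ of $Q-D$ associated with $-\overline\lambda$ as a Lyapunov function: the condition $\lambda\le\overline\lambda$ is precisely what makes the relevant $h$-weighted functional a nonnegative supermartingale, so that the surplus $X_t-\tilde X_t$ stays nonnegative. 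The main obstacle --- and the place where real care is needed --- is reconciling this ``hold back when tied'' rule with the demand that $\tilde X$ be marginally an exact rate-$\lambda$ Poisson process; the delicate step is to show, via the memorylessness of $N$ and the stationarity of the background start, that the held-back mass is redistributed without disturbing the Poisson law. I expect this bookkeeping, rather than the eigenvalue identity or the necessity direction, to be the crux.

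Finally I would dispose of the degenerate cases as consistency checks: when $\alpha_0=\alpha_1=\alpha$ the matrix $M$ gives $\overline\lambda=\alpha$ and the claim reduces to the elementary fact that a rate-$\alpha$ Poisson process dominates a rate-$\lambda$ one iff $\lambda\le\alpha$, while interchanging $\alpha_0$ and $\alpha_1$ merely relabels the two states and leaves $\overline\lambda$ unchanged, so no generality is lost in assuming $\alpha_0\le\alpha_1$.
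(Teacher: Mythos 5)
Your necessity half and your algebra are correct, but you have not proved the half of the lemma that matters. Before detailing that: note the paper itself offers no proof of this statement --- it imports it verbatim as part of Theorem 1.4 of Broman (2007) --- so there is no ``paper approach'' to compare against, and your proposal must stand on its own. The parts that do stand: the identity $\overline\lambda = $ smallest eigenvalue of $D-Q$ checks out (trace $\alpha_0+\alpha_1+\gamma$, and the discriminant $(\alpha_0+\alpha_1+\gamma)^2-4\det(D-Q)$ does simplify to $(\alpha_1-\alpha_0-\gamma)^2+4\gamma(1-p)(\alpha_1-\alpha_0)$); the Feynman--Kac representation $\mathbb P(X_t=0)=\langle \mu_0, e^{t(Q-D)}\mathbf 1\rangle$ together with Perron--Frobenius for the Metzler matrix $Q-D$ gives $\mathbb P(X_t=0)\geq c\,e^{-\overline\lambda t}$ with $c>0$; and since any coupling with $\tilde X_t\leq X_t$ forces $\mathbb P(X_t=0)\leq \mathbb P(\tilde X_t=0)=e^{-\lambda t}$, letting $t\to\infty$ indeed yields $\lambda\leq\overline\lambda$. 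So the maximality assertion is established cleanly.

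The genuine gap is the sufficiency direction --- the existence of the coupling when $\lambda\leq\overline\lambda$ --- which is the implication actually invoked later in the paper (Proposition \ref{prop:cpree}), and which you explicitly defer (``I expect this bookkeeping \ldots to be the crux''). This is not bookkeeping; it is the theorem. Your greedy rule (let $\tilde X$ fire at rate $\lambda$ only while strictly behind $X$) makes domination automatic but destroys the rate-$\lambda$ Poisson marginal, and showing that the suppressed jumps can be rescheduled without disturbing the Poisson law is precisely the entire content of Broman's result. Conversely, if $\tilde X$ fires at rate $\lambda$ unconditionally, the surplus $X_t-\tilde X_t$ goes negative with positive probability under any such scheme (already the independent coupling fails on an event of positive probability), and no Lyapunov/supermartingale certificate attached to the eigenvector $h$ of $Q-D$ can rule this out, because the obstruction is informational, not energetic: the coupler cannot see $B_t$, only the arrival history, and the admissible extraction rate is governed by the conditional law of the hidden state given that history. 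That filtering computation --- the fixed point of the evolution of $\mathbb P(B_t=1\mid \text{extraction history})$, where the quadratic and hence $\overline\lambda$ actually arise --- is what Broman's proof supplies and what your sketch is missing. A further point to keep in mind if you do repair it: the downstream use in Proposition \ref{prop:cpree} identifies each point of $\tilde X_t$ with a point (an unblocked 2-arrow) of the dominating process, i.e.\ it needs a point-set inclusion coupling, which is strictly stronger than the counting inequality $\tilde X_t\leq X_t$ your construction aims at; Broman's thinning-type construction delivers this stronger form, so any substitute argument should as well.
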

Now we use this coupling result to establish a stochastic domination relationship between a point counting process coupled with the multitype contact process and Broman's two-rate point counting process.

\subsection{A CPREE model} \label{subsec:cpree}
Denote the MCP with parameters given in the statement of Theorem \ref{thm:cpree_mcp} by $\eta_t$. Type 1 in this MCP is now effectively turned into a dynamic randomly evolving environment by making its births spontaneous at the maximum rate by replacing $n_1$ by $2d$, the number of nearest neighbors on the $d$-dimensional square lattice. Denote this process by $\xi_t$ with transition rates given below. 
\begin{equation}\label{eq:cpree_rates}
\begin{tabular}{cl}
Transition & Rate \\
$0\rightarrow1$ & $2d\beta\alpha$ \\
$1\rightarrow0$ & $\alpha$ \\
$0\rightarrow2$ & $c\beta n_2$\\
$2\rightarrow0$ & $1$\\
\end{tabular}
\end{equation}
This is almost identical to the CPREE model studied by Remenik \cite{remenik2008} except that the presence of type 2 prevents the random environment from flipping to state 1 in our model. In Remenik's model, type 2 was killed when the the random environment flipped to state 1. Note that our type 1 is analogous to type $-1$ in Remenik's model, and our type 2 is analogous to type 1 there.

We have that the MCP $\eta_t$ stochastically dominates the CPREE $\xi_t$ in the sense that
\begin{align*}
\{x \mid \xi_t(x)=1\} &\supset \{x \mid \eta_t(x)=1\}\\
\{x \mid \xi_t(x)=2\} &\subset \{x \mid \eta_t(x)=2\}.
\end{align*}
In other words, since type 1 is born spontaneously in $\xi_t$ it can have more type 1's and $\eta_t$ can thus have more 2's. This is a straightforward result since both processes can be constructed on the same graphical construction that is given above for the MCP. 
Births for $\xi_t$ at tips of 1-arrows are spontaneous (if the site is empty), otherwise both processes behave identically at other graphical symbols. Each of these symbols preserves the subset comparison relationship given above.

\section{Proof of Theorem \ref{thm:cpree_mcp}}\label{sec:mainproof}

The following proposition is very similar to Proposition 3.2 in \cite{remenik2008}, but the proof is slightly different due to a differing construction which is needed for our particular application and that the underlying interacting particle systems are different. It is now important that our arrows have the tips pointed into the timeline they are attached to when generated from that timeline's point process.

\begin{proposition}\label{prop:cpree}
Let $\xi_t$ be the CPREE described above with transition rates given by \eqref{eq:cpree_rates} and $\tilde\xi_t$ be a standard contact process with birth rate $\lambda \leq \overline\lambda(\beta,c,\alpha)$ from \eqref{eq:mcp_lambar} and death rate $1$. These processes can be coupled so that if 
\begin{equation*}
\{x \mid \tilde\xi_t(x)=1\} \subset \{x \mid \xi_t(x)=2\}
\end{equation*}
holds initially for $t=0$, then it holds for all $t>0$.
\end{proposition}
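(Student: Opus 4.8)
The plan is to realize both processes on one graphical structure and to verify that the asserted containment is preserved at every graphical event, assuming it holds immediately before. The deaths I would couple exactly: the type-2 death marks of $\xi$ (the $\times$ symbols, rate $1$) double as the death marks of the single type of $\tilde\xi$, which is legitimate because both relevant types die at rate $1$. This guarantees that whenever $\xi$ loses its type 2 at a site, $\tilde\xi$ loses its type 1 there at the same instant, so the relation $\{x\mid\tilde\xi_t(x)=1\}\subset\{x\mid\xi_t(x)=2\}$ can never be broken by a death.

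The births are where Lemma~\ref{lem:broman} enters, and the preparatory step is to expose a suitable background process. At each site $x$ I would read off, from the spontaneous type-1 birth clock (rate $2d\beta\alpha$) and the type-1 death marks (the $\bullet$ symbols, rate $\alpha$) alone, an autonomous two-state environment $W^x_t$, ignoring type 2 entirely. Because a spontaneous type-1 birth in $\xi$ is suppressed precisely when $x$ already carries type 2, one checks that $\{t\mid W^x_t=0\}\subseteq\{t\mid\xi_t(x)\neq1\}$: whenever the autonomous environment is off, the site is genuinely free of type 1 and hence available for a type-2 infection. Declaring $x$ blocked on all of $\{W^x_t=1\}$ is therefore a conservative choice that can only undercount the type-2 birth opportunities.

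Conditioned on this environment, the type-2 infection attempts entering $x$ along a fixed incoming edge, gated by availability, form a point process of rate $c\beta$ while $W^x_t=0$ and rate $0$ while $W^x_t=1$. This is exactly Broman's two-rate process with $\alpha_1=c\beta$, $\alpha_0=0$, flip rate $\gamma=\alpha(1+2d\beta)$, and stationary weight $p=1/(1+2d\beta)$, and substituting these into the formula of Lemma~\ref{lem:broman} returns precisely $\overline\lambda(\beta,c,\alpha)$ of \eqref{eq:mcp_lambar}. Since $\lambda\le\overline\lambda(\beta,c,\alpha)$, Lemma~\ref{lem:broman} supplies, independently on each incoming edge $(y,x)$ and with each environment taken in its stationary regime as the lemma requires, a rate-$\lambda$ point process dominated by this availability-gated type-2 process; these edge processes are declared to be the birth arrows of the contact process $\tilde\xi$. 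Here the arrow-direction convention emphasized before the statement is essential: because the attempts into $x$ are generated from the clock of $x$ itself, they constitute a single point process at a single site driven by the single environment $W^x$, which is exactly the geometry Broman's coupling is built for.

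With these pieces in place the induction closes as follows. Suppose the containment holds just before an event. At a $\tilde\xi$-birth into $x$ from a neighbor $y$ we have $\tilde\xi(y)=1$, hence $\xi(y)=2$ by the containment at $y$; the coupling places this arrow on a genuine availability-gated type-2 arrow from $y$, so $\xi$ acquires (or already holds) type 2 at $x$ and $\xi(x)=2$ afterward. At a type-2 death of $\xi$ at $x$ the shared $\times$ removes $\tilde\xi$'s type 1 simultaneously, and the remaining events---type-1 births and deaths in $\xi$---cannot create a type 1 in $\tilde\xi$ nor, by the containment just before, occur at a site where $\xi$ holds type 2, so they too preserve the relation. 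I expect the main obstacle to be the step in which the coupling is made to place each contact-process arrow on a \emph{live} type-2 infection of $\xi$: this requires upgrading Broman's cumulative domination $\tilde X_t\le X_t$ to a genuine thinning, in which every $\tilde\xi$-arrow sits on an actual gated type-2 arrow rather than merely being outnumbered by them, and then checking that this thinning is compatible with the shared death marks so that a type 2 really persists at $x$ throughout any interval on which $\tilde\xi$ holds a type 1 there. This is precisely the point at which the exact form of Broman's coupling, rather than a bare rate inequality, is needed and the construction must be tailored to the present setting.
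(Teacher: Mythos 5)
Your proposal is correct and follows essentially the same route as the paper's proof: the same graphical coupling with shared $\times$ death marks, the same blocked/unblocked environment read off from the spontaneous type-1 clock and the $\bullet$ marks, the same Broman parameters $\alpha_0=0$, $\alpha_1=c\beta$, $\gamma=\alpha(1+2d\beta)$, $p=(1+2d\beta)^{-1}$, and the same event-by-event induction. The one point you flag as the main obstacle---upgrading the domination $\tilde X_t\le Y_t$ to a genuine thinning---is not an additional step to be supplied: Broman's stochastic domination, as the paper invokes it, is already in the point-process (subset) sense, so every point of $\tilde X_t$ automatically sits on an unblocked 2-arrow.
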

\begin{proof}

Consider a single timeline in the MCP graphical construction for site $x$ and the graphical elements generated along it. Recall that arrows are generated with tips pointed towards $x$. If a 2-arrow from neighboring site $y$ to $x$ has a 1-arrow below it (with tip attached at $x$) with no death symbol for type 1 between them, we call the 2-arrow \textit{blocked} and otherwise \textit{unblocked}. A blocked 2-arrow may be unusable since births are spontaneous at the tips of 1-arrows.  Whether a blocked 2-arrow is truly unusable by type 2 depends on the precise history of the process up to that point. This is because flipping to type 1 at the tip of a 1-arrow (since births for type 1 are spontaneous) can be prevented by the site being in state 2. If the site is already in state 2, there may be a death mark for type 2 which causes the site to become vacant in time for the blocked 2-arrow to actually be usable. We are unconcerned with this. We choose to ignore blocked 2-arrows entirely in order to prevent the need to consider the type 1 population at all. 

For each neighbor $y$, let $Y_t$ be the counting process for unblocked 2-arrows from this neighbor to $x$. 
This $Y_t$ is identical to Broman's $X_t$ with $\alpha_0=0$, $\alpha_1=c\beta$, $\gamma=\alpha(1+2d\beta)$, and $p=(1+2d\beta)^{-1}$. We have $2d$ such neighbors and for each one a distinct $Y_t$ process. The background processes for these $Y_t$ are coupled together so that they become blocked and unblocked simultaneously (at rates $\alpha$ and $2d\beta\alpha$ respectively), but when unblocked they increment independently of one another. Note that these background processes must be started in equilibrium initially (still initially coupled together). This may seem like an undesirable requirement, but it doesn't affect the comparison since this just means the $Y_t$ processes are more easily dominated by the counting process for \textit{all} unblocked 2-arrows which starts with all sites unblocked. 

Since $\lambda \leq \overline\lambda$, Lemma \ref{lem:broman} shows that there is a Poisson counting process $\tilde X_t$ with rate $\lambda$ such that $\tilde X_t \leq Y_t$ for all $t\geq0$ (we require $2d$ such processes, one for each neighboring $y$). Identify the locations of the points for counting process $\tilde X_t$ with the locations of birth arrows (involving the same sites $x$ and $y$) for a standard contact process $\tilde\xi_t$ with birth rate $\lambda$ and death rate $1$ (and uses the $\times$ death marks). The counting process $Y_t$ places an unblocked 2-arrow at every location associated with a point counted by $\tilde X_t$, and additionally could possibly place more unblocked 2-arrows at other locations. The 2-arrows associated with $Y_t$ are exactly those unblocked ones of the CPREE $\xi_t$ under consideration. Again, the CPREE may have many more blocked 2-arrows that are ultimately usable for particular initial conditions. 

By construction, all arrows associated with $\tilde X_t$ can be traversed by type 2 in the CPREE and never blocked from below by a pre-existing type 1 infection. Those arrows can also be traversed by the standard contact process $\tilde\xi_t$. So type 2 in $\xi_t$ can traverse any arrow that type 1 in $\tilde\xi_t$ can (and possibly other arrows), and both types are only killed at $\times$'s. 
Thus we see that when
\begin{equation*}
\{x \mid \tilde\xi_t(x)=1\} \subset \{x \mid \xi_t(x)=2\}
\end{equation*}
holds initially at $t=0$, then it holds for all time.
\end{proof}

Now the proof of Theorem \ref{thm:cpree_mcp} is straightforward. 

\begin{proof}[Proof of Theorem \ref{thm:cpree_mcp}]
By putting together Section \ref{subsec:cpree} and Proposition \ref{prop:cpree}, we have that the MCP $\eta_t$ stochastically dominates the CPREE $\xi_t$ which stochastically dominates a standard contact process $\tilde\xi_t$ in the sense that if 
\begin{align}\label{eq:cp_cpree_mcp_comp}
\begin{split}
\{x \mid \tilde\xi_t(x)=0\} 
\supset \{x \mid \xi_t(x)\neq2\} 
\supset \{x \mid \eta_t(x)\neq2\}\\
\{x \mid \tilde\xi_t(x)=1\} 
\subset \{x \mid \xi_t(x)=2\} 
\subset \{x \mid \eta_t(x)=2\}
\end{split}
\end{align}
holds for $t=0$ then it holds for all $t$.
If $\overline\lambda(\beta,c,\alpha)>\lambda_c$ then the contact process $\tilde\xi_t$ can be made supercritical while preserving these subset relationships. 

For the standard contact process, let $\mathbb P^{\{x\}}_{\!\!_{\mathrm{cp}}}$ be the distribution of the process with $x$ being the only initially infected site and all other sites empty (state 0). For the CPREE and MCP let $x$ be occupied by type 2 initially and every other site be in state 1, and let $\mathbb P^{\{x\}}_{\!\!_{\mathrm{cpree}}}$ and $\mathbb P^{\{x\}}_{\!\!_{\mathrm{mcp}}}$ be the distributions of these processes respectively.  
That \eqref{eq:cp_cpree_mcp_comp} holds for all $t\geq0$ provides a stochastic ordering
relationship among these three measures which gives
\begin{equation*}
\mathbb P^{\{x\}}_{\!\!_{\mathrm{cp}}}(\tilde\xi_t(x)=1)\leq \mathbb P^{\{x\}}_{\!\!_{\mathrm{cpree}}}(\xi_t(x)=2)\leq \mathbb P^{\{x\}}_{\!\!_{\mathrm{mcp}}}(\eta_t(x)=2).
\end{equation*}
These processes, as graphically coupled here, satisfy that whenever $\tilde\xi_t(x)=1$ for the standard contact process we have that $\eta_t(x)=2$ for the MCP. This occurs when there is an active path connecting $x$ at time $0$ to itself at time $t$ using only unblocked 2-arrows. By our graphical coupling every active path for the standard contact process is an active path for the MCP.

Let $\mathcal A_1$ be the event that there are active paths from $(x,0)$ to $(x,t)$ for infinitely many $t$ for the standard contact process, and let $\mathcal A_2$ be the event that there are active paths from $(x,0)$ to $(x,t)$ for infinitely many $t$ for the MCP which only use unblocked 2-arrows. Since type 1 of the standard contact process survives strongly we have
\begin{equation}\label{eq:strongsurv2ineq}
0<\mathbb P^{\{x\}}_{\!\!_{\mathrm{cp}}}(x\in A_t \text{ i.o.})=\mathbb P^{\{x\}}_{\!\!_{\mathrm{cp}}}(\mathcal A_1)  \leq \mathbb P^{\{x\}}_{\!\!_{\mathrm{mcp}}}(\mathcal A_2) \leq \mathbb P^{\{x\}}_{\!\!_{\mathrm{mcp}}}(x\in B_t \text{ i.o.}).
\end{equation}
thus type 2 in the MCP survives strongly. 
\end{proof} 
The last inequality in \eqref{eq:strongsurv2ineq} is not necessarily equality since there may be active paths for the MCP that use blocked 2-arrows. Which arrows are used will depend on the particular realized graphical structure.

Solving $\overline\lambda=\frac2d\geq\lambda_c$ shows that $c>\frac{2}{\beta d}+\frac{4d\alpha}{d\alpha-2}$ is a sufficient condition for survival of type 2 when $\alpha>\frac2d$, $\beta>\frac2d$. For $\alpha$ large enough $c>5$ is eventually sufficient for any dimension. If $\alpha$ is small, then $c$ may be required to be quite large by the bound given here. 
For $d\geq 3$ improved upper bounds on the contact process critical value were recently derived in \cite{xue2018}.
With $\beta=4$ both $6=c<\alpha=8$ and $7=c>\alpha=6$ give $\overline\lambda>\frac2d\geq \lambda_c$ for any dimension. 
These are quite extreme parameter values and are far away from the threshold of the BDR conjecture, which claims that $c>1, c\beta>\lambda_c$ is all that is required. 

Here are three remarks that follow directly from our results and the stochastic comparisons used in this paper.
\begin{remark}
	The CPREE considered here survives strongly when $\overline\lambda>\lambda_c$.
	This is a direct result from the comparison between the CPREE and the standard contact process used in the proof of Theorem \ref{thm:cpree_mcp}.
\end{remark}
\begin{remark}
	Consider the MCP with parameters as in Theorem \ref{thm:cpree_mcp} with $\overline\lambda>\lambda_c$. If $c>\alpha>1$, then type 1 dies out in the sense that starting from a translation invariant initial distribution $\mu$ with $\mu(\eta(x)=i)>0$ for $i=1,2$, $\mathbb P^\mu(\eta_t(x)=1)\rightarrow 0$ as $t\rightarrow\infty$. %
	This is a direct result of Proposition \ref{lem:mcp1} 
	since $\beta_2>\beta_1$ and $\delta_2<\delta_1$. 
\end{remark}
\begin{remark}
	Consider the processes with parameters as in the proof of Theorem \ref{thm:cpree_mcp}. 
	With translation invariant initial distributions $0<\mu_1(\tilde\xi(x)=1)\leq\mu_2(\eta(x)=2)$ for the standard contact process and MCP, respectively, we have that 
\begin{equation*}
0<\overline\nu_1(\tilde\xi(x)=1)=\lim_{t\rightarrow\infty}\mathbb P^{\mu_1}(\tilde\xi_t(x)=1)\leq \lim_{t\rightarrow\infty}\mathbb P^{\mu_2}(\eta_t(x)=2)\leq\overline\nu_2(\eta(x)=2)
\end{equation*}
giving survival for type 2 when started from translation invariant initial distributions. Whether the last limit exists is unknown since we do not have a complete convergence theorem when $\delta_1\neq\delta_2$ (see \cite{mountford2019} for complete convergence when $\delta_1=\delta_2$). Furthermore we have no useful comparison for type 1 in our MCP and thus are unable to show that type 1 dies out with $\lim_{t\rightarrow\infty}\mathbb P^{\mu_2}(\eta_t(x)=1)=0$.
\end{remark}

\section*{Acknowledgments}
We thank the referee for suggestions that significantly improved the manuscript.

\providecommand{\bysame}{\leavevmode\hbox to3em{\hrulefill}\thinspace}

\end{document}